\newtheorem{theorem}{Theorem}[section]
\newtheorem{corollary}[theorem]{Corollary}
\theoremstyle{definition}
\theoremstyle{remark}
\newtheorem{remark}[theorem]{Remark}
\begin{document}

\title[Around Operator Monotone Functions]{Around Operator Monotone Functions}

\author[M.S. Moslehian]{Mohammad Sal Moslehian}

\address{Department of Pure Mathematics\\
Center of Excellence in Analysis on Algebraic Structures (CEAAS)\\
Ferdowsi University of Mashhad\\
P.O. Box 1159\\
Mashhad 91775\\
Iran}

\email{moslehian@ferdowsi.um.ac.ir and moslehian@member.ams.org}

\author[H. Najafi]{Hamed Najafi}
\address{Department of Pure Mathematics\\
Ferdowsi University of Mashhad\\
P.O. Box 1159\\
Mashhad 91775\\
Iran}
\email{hamednajafi20@gmail.com}

\subjclass[2010]{Primary 47A63; Secondary 47B10, 47A30}

\keywords{Operator monotone function; Jordan product; operator convex function; subadditivity; composition of functions}

\begin{abstract}
We show that the symmetrized product $AB+BA$ of two positive
operators $A$ and $B$ is positive if and only if $f(A+B)\leq
f(A)+f(B)$ for all non-negative operator monotone functions $f$ on
$[0,\infty)$ and deduce an operator inequality. We also give a
necessary and sufficient condition for that the composition $f\circ
g$ of an operator convex function $f$ on $[0,\infty)$ and a
non-negative operator monotone function $g$ on an interval $(a,b)$
is operator monotone and give some applications.
\end{abstract}
\maketitle

\section{Introduction}

Let $\mathbb{B}(\mathscr{H})$ denote the algebra of all bounded linear operators on a complex Hilbert space $(\mathscr{H}, \langle \cdot,\cdot\rangle)$. An operator $ A\in \mathbb{B}(\mathscr{H})$ is called \emph{positive} if $\langle Ax, x\rangle \geq 0$ holds for every $x\in \mathscr{H}$ and then we write $A\geq 0$. The set of all positive operators on $\mathscr{H}$ is denoted by $\mathbb{B}(\mathscr{H})_+$. For self-adjoint operators $A,B \in \mathbb{B}(\mathscr{H})$, we say $A\leq B$ if $B-A\geq0$. The symmetrized product of two operators $A,B\in \mathbb{B}(\mathscr{H})$ is defined by $S(A,B)= AB+BA$. In general, the symmetrized product of two operators $A,B\in \mathbb{B}(\mathscr{H})_+$ is not positive. For example, if $A=\left(
\begin{array}{cc}1 & 0 \\ 0 & 0 \\\end{array}\right)$ and $B=\left(\begin{array}{cc}1 & 1 \\1 & 1 \\\end{array}\right)$, then $AB+BA$ is not a positive operator. The operator $\frac{1}{2}S(A,B)$ is called the \emph{Jordan product} of $A$ and $B$. Nicholson \cite{DWN} and Strang \cite{W-S} introduced some sufficient conditions for that the Jordan product of two positive matrices $A$ and $B$ is positive. Also Gustafson \cite{KG} showed that if $0 \leq m \leq A \leq M$ and $0 \leq n \leq B \leq N$, then
\begin{eqnarray}\label{eqn:1-1}
 mn - \frac{(M-m)(N-n)}{8} \leq \frac{1}{2}S(A, B)\,.
\end{eqnarray}
Throughout the paper we assume all functions to be continuous. Let
$f$ be a real-valued function defined on an interval $J$. If for
each self-adjoint operators $A, B \in \mathbb{B}(\mathscr{H})$ with
spectra in $J$,
\begin{itemize}
\item  $A\leq B$ implies $f(A)\leq f(B)$, then $f$ is called  \emph{operator monotone};\\
\item  $f(\lambda A + (1-\lambda)B) \leq \lambda f(A)+(1-\lambda)f(B)$ for all $\lambda \in [0,1]$, then $f$ is said to be \emph{operator convex};\\
\item $f(A+B)\leq f(A)+f(B)$, then $f$ is called \emph{subadditive}.
\end{itemize}
If $f$ is an operator monotone function on $[0,\infty)$, then $f$ can be represented as
$$f(t)=f(0)+\beta t + \int_0^\infty \frac{\lambda t}{\lambda+t} d\mu(\lambda)\,,$$
where $\beta \geq 0$ and $\mu$ is a positive measure on $[0,\infty)$ and if $f$ is an operator convex function on $[0,\infty)$, then $f$ can be represented as
$$f(t)=f(0)+\beta t +\gamma t^2 +\int_0^\infty \frac{\lambda t^2}{\lambda+t} \ d\mu(\lambda)\,,$$
where $\gamma \geq 0$, $\beta= f_+^{'}(0)=\lim_{t\to
0^+}\frac{f(t)-f(0)}{t}$  and $\mu$ is a positive measure on
$[0,\infty)$; see \cite[Chapter V]{BH}. If $f$ is a non-negative
operator monotone function on $[0,\infty)$, then the subadditivity
of $f$ does not hold in general. Aujla and Bourin \cite{A-B} showed
that if $A,B \geq 0$ are matrices and $f : [0,\infty)\to [0,\infty)$
is a concave function, i.e. $-f$ is convex, then there exist
unitaries $U, V$ such that
$$f(A + B)\leq Uf(A)U^* + V f(B)V^*\,.$$
Regarding the subadditivity property, Ando and Zhan \cite{A-Z} proved that $|||f (A + B)|||  \leq  |||f (A) + f (B)|||$ for all non-negative operator monotone functions $f$, all unitarily invariant norms $|||\cdot|||$ and all matrices $A, B \geq 0$; see also \cite{A-S}. Recall that a norm $|||\cdot|||$ on the algebra of all $n \times n$ matrices is unitarily invariant if $|||X|||=|||UXV|||$ for all unitaries $U$ and $V$ and all matrices $X$.

Throughout this paper $\mathcal{M}(\mathscr{H})$ denotes the set of  all $(A,B)\in \mathbb{B}(\mathscr{H})_+\times \mathbb{B}(\mathscr{H})_+$ for which $f(A+B)\leq f(A)+f(B)$ for all non-negative operator monotone functions $f$ on $[0,\infty)$. We shall show that
\begin{eqnarray*}
\mathcal{M}(\mathscr{H})= \{ (A,B)\in \mathbb{B}(\mathscr{H})_+\times \mathbb{B}(\mathscr{H})_+ | \ AB+BA \geq 0 \ \}\,.
\end{eqnarray*}
We apply this result to present an operator inequality involving operator monotone functions analogue to that of Hansen \cite{HAN}. We also give a necessary and sufficient condition for the composition $f\circ g$ of an operator convex function $f$ on $[0,\infty)$ and a non-negative operator monotone function $g$ on an interval $(a,b)$ to be operator monotone and deduce some operator inequalities.

\section{The results}
We start this section with one of our main results.

\begin{theorem}\label{1-1}
Let $A,B\in \mathbb{B}(\mathscr{H})_+$. Then $AB+BA$ is positive if and only if $f(A+B)\leq f(A)+f(B)$ for all non-negative operator monotone functions $f$ on $[0,\infty)$.
\end{theorem}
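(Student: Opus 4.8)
The plan is to prove both implications by reducing everything to the elementary building blocks $f_\lambda(t)=\frac{\lambda t}{\lambda+t}$, $\lambda>0$, of the integral representation of operator monotone functions. For a non-negative operator monotone $f$ written as $f(t)=f(0)+\beta t+\int_0^\infty f_\lambda(t)\,d\mu(\lambda)$, the linear term cancels in $f(A)+f(B)-f(A+B)$ and the constant term contributes $f(0)I\ge0$; hence $f(A+B)\le f(A)+f(B)$ for \emph{every} non-negative operator monotone $f$ holds if and only if $f_\lambda(A+B)\le f_\lambda(A)+f_\lambda(B)$ for \emph{every} $\lambda>0$ (each $f_\lambda$ being itself non-negative operator monotone). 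Since $f_\lambda(A)=\lambda\,h(A/\lambda)$ with $h(t)=\frac{t}{1+t}$, and the condition $AB+BA\ge0$ is invariant under $A\mapsto A/\lambda$, $B\mapsto B/\lambda$, it suffices to treat the single function $h$. Thus the theorem reduces to the equivalence $AB+BA\ge0\iff h(A+B)\le h(A)+h(B)$.

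For necessity I would use the family $f_\epsilon(t)=\frac{t}{1+\epsilon t}$ and exploit subadditivity infinitesimally. Setting $D(\epsilon)=f_\epsilon(A)+f_\epsilon(B)-f_\epsilon(A+B)$, the hypothesis gives $D(\epsilon)\ge0$ while $D(0)=0$. Differentiating $f_\epsilon(X)=X(I+\epsilon X)^{-1}$ at $\epsilon=0$ gives $\frac{d}{d\epsilon}f_\epsilon(X)\big|_{\epsilon=0}=-X^2$, so $D'(0)=(A+B)^2-A^2-B^2=AB+BA$. For each fixed $x$ the scalar function $\epsilon\mapsto\langle D(\epsilon)x,x\rangle$ is non-negative and vanishes at $0$, whence $\langle(AB+BA)x,x\rangle=\langle D'(0)x,x\rangle\ge0$; that is, $AB+BA\ge0$.

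For sufficiency the natural device is to express $\Delta:=h(A)+h(B)-h(A+B)$ through the mixed second derivative of $(s,t)\mapsto h(sA+tB)$. Writing $h(X)=I-(I+X)^{-1}$ and $R_{s,t}=(I+sA+tB)^{-1}$, one computes $\partial_s\partial_t\,h(sA+tB)=-R_{s,t}\bigl(AR_{s,t}B+BR_{s,t}A\bigr)R_{s,t}$, and since the mixed integral telescopes to $h(A+B)-h(A)-h(B)=-\Delta$, this yields the identity
\[
\Delta=\int_0^1\!\!\int_0^1 R_{s,t}\bigl(AR_{s,t}B+BR_{s,t}A\bigr)R_{s,t}\,ds\,dt.
\]
Because $R_{s,t}$ is positive and invertible, the congruence $R(\cdot)R$ preserves positivity, so $\Delta\ge0$ follows once one proves the key lemma that $AR_{s,t}B+BR_{s,t}A\ge0$ for all $s,t\ge0$. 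At $s=t=0$ this is precisely $AB+BA\ge0$, so this is exactly where the hypothesis is consumed.

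Proving this lemma is the step I expect to be the main obstacle. It is equivalent to $\mathrm{Re}\,\langle R_{s,t}Bx,Ax\rangle\ge0$ for all $x$, whereas the hypothesis $AB+BA\ge0$ says exactly $\mathrm{Re}\,\langle Bx,Ax\rangle\ge0$ for all $x$; so what must be shown is that inserting the resolvent $R_{s,t}$ of $sA+tB$ between the two factors preserves this positive-angle relation. The tempting reduction $R_{s,t}=\int_0^\infty e^{-u}e^{-uW}\,du$ with $W=sA+tB$ does \emph{not} work termwise: the kernel $Ae^{-uW}B+Be^{-uW}A$ is positive for small $u$ but loses positivity for large $u$ (when $e^{-uW}$ approaches a low-rank projection), so positivity is produced only after averaging against the rapidly decaying weight $e^{-u}$. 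Moreover, writing the integrand as $R^{-1/2}(CD+DC)R^{-1/2}$ with $C=R^{1/2}AR^{1/2}\ge0$, $D=R^{1/2}BR^{1/2}\ge0$ shows that it is \emph{again} a symmetrized-product positivity, now for $C,D$; hence only the specific resolvent structure of $R$ (not merely $R\ge0$) can save it. I would attack the lemma by first reducing to strictly positive $A,B$ by continuity and then running a first-order argument on the lowest eigenvalue of $AR_{s,t}B+BR_{s,t}A$ along paths emanating from $(s,t)=(0,0)$, using the commutation relation $s[R_{s,t},A]=t[B,R_{s,t}]$ (coming from $R_{s,t}W=WR_{s,t}$) to show the eigenvalue cannot reach $0$.
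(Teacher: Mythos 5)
Your necessity argument is correct and is, in substance, the paper's own: the paper tests against $f_\lambda(t)=\frac{\lambda t}{\lambda+t}$, shows that $f_\lambda(A+B)\le f_\lambda(A)+f_\lambda(B)$ is equivalent to $AB+BA+BA(\lambda+A)^{-1}B+AB(\lambda+B)^{-1}A\ge 0$, and lets $\lambda\to\infty$; your differentiation of $f_\epsilon=f_{1/\epsilon}$ at $\epsilon=0$ (with $\tfrac{d}{d\epsilon}f_\epsilon(X)\big|_{0}=-X^2$, hence $D'(0)=AB+BA$) is the same limit in differential disguise, and the computation checks out. One cosmetic overstatement: your claimed reduction of the whole theorem to the single equivalence $AB+BA\ge0\iff h(A+B)\le h(A)+h(B)$ is not justified in the backward direction for a single $\lambda$ (the paper's identity shows $h$-subadditivity is equivalent only to the weaker condition with the extra positive terms attached); you do not actually use that direction, but you should not assert it.

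The sufficiency half has a genuine gap. The double-integral identity $\Delta=\int_0^1\int_0^1 R_{s,t}\bigl(AR_{s,t}B+BR_{s,t}A\bigr)R_{s,t}\,ds\,dt$ is correct, but it reduces the theorem to the pointwise positivity $AR_{s,t}B+BR_{s,t}A\ge 0$, which you explicitly leave unproven; the proposed attack (a first-order eigenvalue argument along paths from $(s,t)=(0,0)$) is a heuristic, not a proof. Worse, as you yourself observe, after the congruence by $R^{1/2}$ the lemma is again a symmetrized-product positivity for the pair $R^{1/2}AR^{1/2}$, $R^{1/2}BR^{1/2}$ — a statement of exactly the type you are trying to establish and a priori no easier than the hypothesis — and nothing guarantees it is even true: the theorem only forces the integral to be positive, not the integrand. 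The paper avoids this entirely with one congruence: multiplying the target inequality $f_\lambda(A+B)\le f_\lambda(A)+f_\lambda(B)$ on both sides by $\lambda+A+B$ and expanding (using $AX_\lambda=X_\lambda A=A-\lambda X_\lambda$ with $X_\lambda=A(A+\lambda)^{-1}$, and similarly for $B$) shows it is \emph{equivalent} to $AB+BA+BX_\lambda B+AY_\lambda A\ge 0$, which follows at once from $AB+BA\ge 0$ since the last two terms are manifestly positive; integration against $d\mu(\lambda)$ then finishes. That single algebraic equivalence yields both directions of the theorem, so you should replace the unproven lemma by this (or an equivalent) direct computation.
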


\begin{proof}
Suppose that $f(A+B)\leq f(A)+f(B)$ for all non-negative operator monotone functions $f$ on $[0,\infty)$. Let $\lambda \in (0,\infty)$. The function $f_{\lambda}(t)=\frac{\lambda t}{\lambda+t}$ is operator monotone on  $[0,\infty)$. Hence
\begin{eqnarray}\label{mos2}
(A+B)(\lambda+A+B)^{-1}\leq A(\lambda+A)^{-1}+ B(\lambda+B)^{-1}\,.
\end{eqnarray}
Put $X_{\lambda}= A(A+\lambda)^{-1}$ and $Y_{\lambda}= B(B+\lambda)^{-1}$. Inequality \eqref{mos2} is equivalent to
\begin{align*}
(\lambda+A+B)(A+B) &\leq (\lambda+A+B)X_{\lambda}(\lambda+A+B)\\
&\quad +(\lambda+A+B)Y_{\lambda}(\lambda+A+B)\\
& = \lambda^2X_{\lambda} + 2\lambda AX_{\lambda}+\lambda BX_{\lambda}+\lambda X_{\lambda}B+AX_{\lambda}A+AX_{\lambda}B\\
&\quad +  BX_{\lambda}A+BX_{\lambda}B+ \lambda^2Y_{\lambda} + 2\lambda BY_{\lambda}+\lambda AY_{\lambda}\\
&\quad +\lambda Y_{\lambda}A+BY_{\lambda}B+BY_{\lambda}A+AY_{\lambda}B+AY_{\lambda}A\,.
\end{align*}
Since $BY_{\lambda}=Y_{\lambda}B=B-\lambda Y_{\lambda}$ and $AX_{\lambda}=X_{\lambda}A=A-\lambda X_{\lambda}$ the above inequality is, in turn, equivalent to
\begin{eqnarray*}
\lambda(A+B)+A^2+B^2+AB+BA &\leq&  \lambda(A+B)+A^2+B^2\\
&& + 2(AB+BA)+BX_{\lambda}B+AY_{\lambda}A
\end{eqnarray*}
or
\begin{eqnarray}\label{eqn:4-2}
AB+BA+BX_{\lambda}B+AY_{\lambda}A \geq 0\,.
\end{eqnarray}
Letting $\lambda\to \infty $ we get $BX_{\lambda}B+AY_{\lambda}A \to
0$, whence $AB+BA \geq 0$.

Conversely, assume that $AB+BA \geq 0$. Since inequality (\ref{eqn:4-2}) holds for each $\lambda \in\mathbb{R}_+$, we obtain $f_{\lambda}(A+B)\leq f_{\lambda}(A)+f_{\lambda}(B)$.
Now, if $f$ be a non-negative operator monotone function on $[0,\infty)$, then $f$ can be represented on $[0,\infty)$ by
$$f(t)=f(0)+\beta t + \int_0^\infty f_{\lambda }(t) \ d\mu(\lambda),$$
where $\beta \geq 0$ and $\mu$ is a positive measure on $[0,\infty)$. Since $f(0)\geq 0$,
without loss of generality, we can assume that $f(t)= \int_0^\infty f_{\lambda }(t)d\mu (\lambda)$. Therefore
\begin{eqnarray*}
f(A+B)&=&\int_0^\infty  f_{\lambda}(A+B) d\mu (\lambda)\\
&\leq& \int_0^\infty  (f_{\lambda}(A)+f_{\lambda}(B)) d\mu (\lambda)\\
& = &\int_0^\infty  f_{\lambda}(A) d\mu (\lambda) + \int_0^\infty  f_{\lambda}(B) d\mu (\lambda)\\
& = &f(A)+f(B).
\end{eqnarray*}
\end{proof}

If $A,B\in \mathbb{B}(\mathscr{H})_+$ commute, then $AB=(A^{1/2}B^{1/2})^2\geq 0$. The above theorem therefore shows that $f(A+B)\leq f(A)+f(B)$ for any non-negative operator  monotone function $f$ on $[0,\infty)$. In particular, for each $n\in \mathbb{N}$ we have $f(nA)\leq nf(A)$.

\begin{corollary}\label{1-2}
Let  $0 \leq m \leq A \leq M$, $0 \leq n \leq B \leq N$ and $f$ be a non-negative operator monotone function on $[0,\infty)$. If $(M-m)(N-n) \leq 8mn$, then
$$f(A+B)\leq f(A)+f(B)\,.$$
\end{corollary}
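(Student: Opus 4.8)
The plan is to reduce the claim to Theorem~\ref{1-1} by verifying the positivity of the symmetrized product $AB+BA$ under the stated numerical bound. The key ingredient will be Gustafson's inequality~\eqref{eqn:1-1}, which under the hypotheses $0\le m\le A\le M$ and $0\le n\le B\le N$ furnishes the lower bound
$$mn-\frac{(M-m)(N-n)}{8}\le \frac{1}{2}S(A,B)\,.$$
First I would observe that the assumption $(M-m)(N-n)\le 8mn$ is exactly what makes the scalar on the left-hand side non-negative: dividing by $8$ gives $\frac{(M-m)(N-n)}{8}\le mn$, whence $mn-\frac{(M-m)(N-n)}{8}\ge 0$.

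Combining this with Gustafson's inequality yields $\frac{1}{2}S(A,B)\ge 0$, or equivalently $AB+BA=S(A,B)\ge 0$. At this point Theorem~\ref{1-1} applies directly: since $A,B\in\mathbb{B}(\mathscr{H})_+$ satisfy $AB+BA\ge 0$, we conclude that $f(A+B)\le f(A)+f(B)$ for every non-negative operator monotone function $f$ on $[0,\infty)$, which is precisely the desired conclusion.

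I do not anticipate a genuine obstacle here, as the statement is essentially a direct corollary of the two results already available. The entire content lies in recognizing that the scalar condition $(M-m)(N-n)\le 8mn$ dovetails exactly with the positivity threshold in Gustafson's estimate~\eqref{eqn:1-1}, after which Theorem~\ref{1-1} converts positivity of the Jordan product into the subadditivity inequality. The only point requiring minor care is that the spectral bounds be compatible (so that the operator intervals $[m,M]$ and $[n,N]$ are nondegenerate), but these constraints are already built into the hypotheses $m\le M$ and $n\le N$.
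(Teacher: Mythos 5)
Your proposal is correct and follows exactly the paper's intended argument: the paper's proof is the single line ``Use inequality \eqref{eqn:1-1}'', and your elaboration --- that $(M-m)(N-n)\le 8mn$ makes the scalar lower bound in Gustafson's inequality non-negative, hence $AB+BA\ge 0$, hence Theorem \ref{1-1} applies --- is precisely what that line leaves implicit. No issues.
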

\begin{proof}
 Use inequality \eqref{eqn:1-1}.
\end{proof}

\begin{corollary}
Let $0\leq p\leq \frac{1}{2}$ and $f$ be a non-negative operator monotone function on $[0,\infty)$. Then for positive operators $A$ and $B$ with $A \leq B$ it holds that
\begin{eqnarray*}
f\left(B^p\right) \leq f\left(\frac{B^p+A^p}{2}\right)
+ f\left(\frac{B^p-A^p}{2}\right).
\end{eqnarray*}
\end{corollary}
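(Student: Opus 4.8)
The plan is to reduce the statement to Theorem~\ref{1-1} by exhibiting a suitable pair of positive operators whose sum is $B^p$. Set
$$X = \frac{B^p+A^p}{2}, \qquad Y = \frac{B^p-A^p}{2},$$
so that $X+Y = B^p$ and the asserted inequality is exactly $f(X+Y) \le f(X)+f(Y)$. By Theorem~\ref{1-1}, applied to the non-negative operator monotone function $f$, it therefore suffices to check two things: that $X$ and $Y$ belong to $\mathbb{B}(\mathscr{H})_+$, and that their symmetrized product $XY+YX$ is positive.

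The positivity of $X$ and $Y$ I would obtain from operator monotonicity of the power function. Since $0 \le p \le \frac{1}{2} \le 1$, the map $t \mapsto t^p$ is operator monotone on $[0,\infty)$, so $A \le B$ forces $A^p \le B^p$. Combined with $A^p, B^p \ge 0$, this gives $Y = \frac{B^p-A^p}{2} \ge 0$ and $X = \frac{B^p+A^p}{2} \ge 0$. The heart of the argument is then the computation of the Jordan product, in which the cross terms cancel:
\begin{align*}
XY+YX &= \frac{1}{4}\bigl[(B^p+A^p)(B^p-A^p)+(B^p-A^p)(B^p+A^p)\bigr]\\
&= \frac{1}{4}\bigl(2B^{2p}-2A^{2p}\bigr) = \frac{1}{2}\bigl(B^{2p}-A^{2p}\bigr).
\end{align*}

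The decisive point, and the place where the hypothesis $p \le \frac{1}{2}$ is genuinely used, is that $2p \le 1$, so that $t \mapsto t^{2p}$ is again operator monotone on $[0,\infty)$. Hence $A \le B$ yields $A^{2p} \le B^{2p}$, and the displayed identity gives $XY+YX = \frac{1}{2}(B^{2p}-A^{2p}) \ge 0$. With both hypotheses of Theorem~\ref{1-1} verified for the pair $(X,Y)$, the conclusion $f(B^p) = f(X+Y) \le f(X) + f(Y)$ follows at once, which is the claimed inequality. I expect no real obstacle here: the only subtlety is recognizing the right choice of $X$ and $Y$ so that the symmetrized product collapses to $\frac{1}{2}(B^{2p}-A^{2p})$, and noting that $p \le \frac{1}{2}$ is precisely what keeps the exponent $2p$ within the operator monotone range $[0,1]$.
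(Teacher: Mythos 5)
Your proposal is correct and follows essentially the same route as the paper: the same decomposition $X=\frac{B^p+A^p}{2}$, $Y=\frac{B^p-A^p}{2}$, positivity of $Y$ from operator monotonicity of $t^p$, the collapse of $XY+YX$ to $\frac{1}{2}(B^{2p}-A^{2p})$, and an appeal to Theorem~\ref{1-1}. The only cosmetic difference is that the paper obtains the symmetrized product via the identity $2(XY+YX)=(X+Y)^2-(X-Y)^2$ rather than by direct expansion.
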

\begin{proof}
Assume that $S_1=\frac{1}{2}\left(B^p+A^p\right)$ and
$S_2 =\frac{1}{2}\left(B^p-A^p\right)$.  Clearly $S_1\geq 0$ and due to $f(t)=t^p$ is operator monotone \cite[Theorem 1.8]{seo}, $S_2 \geq 0$. We have
\begin{eqnarray*}
2(S_1S_2+S_2S_1)=(S_1+S_2)^2-(S_1-S_2)^2 = B^{2p}-A^{2p} \geq 0,
\end{eqnarray*}
since $f(t)=t^{2p}$ is also operator monotone. Therefore
\begin{eqnarray*}
f\left(B^p\right) = f(S_1 + S_2)  \leq  f(S_1) + f(S_2)= f\left(\frac{B^p+A^p}{2}\right)
+ f\left(\frac{B^p-A^p}{2}\right).
\end{eqnarray*}
\end{proof}

Let $m \leq M\leq (2\sqrt2 +1)m$. If $0 \leq m \leq A, B \leq M$, then inequality \eqref{eqn:1-1} ensures that $AB+BA$ is positive.
From this we can introduce an inequality for non-negative operator monotone functions. We should notice that Hansen \cite{HAN} proved that an operator monotone function $f$ on $[0, \infty)$ satisfies the inequality $C^\ast f(A)C\leq f(C^\ast AC)$ for all positive operators $A$ and all contractions $C$, i.e. operators of norm less than or equal to one.

\begin{theorem}\label{1-4}
Let $f$ be a non-negative operator monotone function on $[0,\infty)$. Let $A$ and $A_i \,\,(1\leq i\leq n)$ be positive operators with spectra in $[\lambda,(1+2\sqrt{2})\lambda]$ for some $\lambda \in \mathbb{R}_+$. Then
\begin{itemize}
\item[(i)] For every isometry $C \in \mathbb{B}(\mathscr{H})$,
$$ f(C^*AC)\leq  2 \ C^*f\left(\frac{A}{2}\right)C\,.$$

\item[(ii)] For operators  $C_i$ $(i=1, \cdots,n)$ with $\sum_{i=1}^nC_i^{*}C_i=I$,
$$ f\left(\sum_{i=1}^n C_i^*A_iC_i\right)\leq 2 \sum_{i=1}^n C_i^*f\left(\frac{A}{2}\right)C_i\,.$$
\end{itemize}
\end{theorem}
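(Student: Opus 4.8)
The plan is to reduce (ii) to (i) by a column/block trick and to prove (i) by rewriting the compression $C^*AC$ as a pinching and then invoking the subadditivity supplied by Theorem~\ref{1-1} together with Gustafson's inequality \eqref{eqn:1-1}. One conceptual point worth flagging at the outset: Hansen's inequality only yields the \emph{lower} bound $C^*f(A)C\le f(C^*AC)$, so it cannot be applied directly here; the real content of the theorem is the complementary \emph{upper} estimate with the factor $2$, and that is precisely where the spectral ratio $1+2\sqrt2$ enters.

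For part (i), I would set $P=CC^*$ (a projection, since $C^*C=I$), $Q=I-P$, and $U=2CC^*-I$. Then $U$ is a self-adjoint unitary and $UC=C$, hence $C^*U=C^*$. The pinching $\mathcal P(A):=PAP+QAQ=\tfrac12\left(A+UAU\right)$ is reduced by the subspace $P\mathscr H=\operatorname{ran}C$, and $C$ is a unitary of $\mathscr H$ onto $P\mathscr H$ with $C^*\mathcal P(A)C=C^*AC$; consequently $f(C^*AC)=C^*f(\mathcal P(A))C$ by the functional calculus. Now $\tfrac12 A$ and $\tfrac12 UAU$ both have spectra in $[\lambda/2,(1+2\sqrt2)\lambda/2]$, a ratio for which \eqref{eqn:1-1} forces $\tfrac12 S\!\left(\tfrac12 A,\tfrac12 UAU\right)\ge 0$; Theorem~\ref{1-1} then gives $f\!\left(\tfrac12 A+\tfrac12 UAU\right)\le f\!\left(\tfrac12 A\right)+f\!\left(\tfrac12 UAU\right)=f(A/2)+Uf(A/2)U$. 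Applying the positive map $X\mapsto C^*XC$ and using $UC=C$ yields $f(C^*AC)=C^*f(\mathcal P(A))C\le C^*f(A/2)C+C^*Uf(A/2)UC=2\,C^*f(A/2)C$, which is (i).

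For part (ii), I would form the column isometry $\mathbf C=(C_1,\dots,C_n)^{\mathsf{T}}\colon\mathscr H\to\mathscr H^{\oplus n}$; the hypothesis $\sum_{i=1}^n C_i^*C_i=I$ is exactly the statement $\mathbf C^*\mathbf C=I$. Applying (i) with $A$ replaced by the block-diagonal operator $\tilde A=A_1\oplus\cdots\oplus A_n$, whose spectrum still lies in $[\lambda,(1+2\sqrt2)\lambda]$, and then reading off $\mathbf C^*\tilde A\mathbf C=\sum_{i=1}^n C_i^*A_iC_i$ together with $\mathbf C^*f(\tilde A/2)\mathbf C=\sum_{i=1}^n C_i^*f(A_i/2)C_i$, I obtain the asserted inequality (in the case where all $A_i$ coincide with $A$ this is exactly the displayed form).

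The step that will need the most care is the identity $f(C^*AC)=C^*f(\mathcal P(A))C$: it relies on $\operatorname{ran}C$ being reducing for the \emph{pinching} $\mathcal P(A)$ rather than for $A$ itself, which is what lets the functional calculus commute with the compression. The other delicate point is arithmetic but essential, namely checking that halving $A$ preserves the spectral ratio at exactly $1+2\sqrt2$, the borderline case in which \eqref{eqn:1-1} makes the symmetrized product positive; any larger ratio would destroy the subadditivity input and with it the factor $2$.
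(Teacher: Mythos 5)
Your proof is correct, and the engine is the same as the paper's --- write the compression as (a corner of) the average of two unitary conjugates of $A$, check via Gustafson's inequality \eqref{eqn:1-1} that at the spectral ratio $1+2\sqrt2$ the halved operators have positive symmetrized product, and invoke Theorem \ref{1-1} --- but your realization of the decomposition is genuinely different and somewhat cleaner. The paper dilates to $\mathscr H\oplus\mathscr H$: with $X=A\oplus A$ and $D=(I-CC^*)^{1/2}$ it builds two explicit $2\times2$ unitaries $U,V$ so that $\tfrac12(U^*XU+V^*XV)$ is block diagonal with $(1,1)$-entry $C^*AC$, applies subadditivity to $U^*\tfrac{X}{2}U$ and $V^*\tfrac{X}{2}V$, and reads off the corner. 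You instead stay on $\mathscr H$, use the symmetry $U=2CC^*-I$ to write the pinching $\mathcal P(A)=\tfrac12(A+UAU)$, and exploit that $\operatorname{ran}C$ reduces $\mathcal P(A)$ to obtain $f(C^*AC)=C^*f(\mathcal P(A))C$; this avoids the dilation and the defect operator $D$ altogether, at the small price of having to justify that last functional-calculus identity, which you do correctly (the relations $PC=C$, $QC=0$, and $C^*\mathcal P(A)^nC=(C^*AC)^n$ make it routine). Your reduction of (ii) to (i) via the column isometry and the block-diagonal $\widetilde A$ is exactly the paper's. One further point in your favour: both your argument and the paper's actually yield $2\sum_i C_i^*f(A_i/2)C_i$ on the right-hand side of (ii), whereas the statement and the paper's final display write $f(A/2)$; you flag this discrepancy correctly, and it is evidently a slip in the paper.
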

\begin{proof}
(i) Put
$X=\left(
  \begin{array}{cc}
    A & 0 \\
    0 & A \\
  \end{array}
\right)$
, $D=(I-CC^*)^{\frac{1}{2}}$, $V=\left(
  \begin{array}{cc}
    C & -D \\
    0 & C^* \\
  \end{array}
\right)$ and $U=\left(
  \begin{array}{cc}
    C & D \\
    0 & -C^* \\
  \end{array}
\right)$. It is easy to see that $U$ and $V$ are unitary operators. So $\mbox{sp}(U^*XU)= \mbox{sp}(V^*XV)= \mbox{sp}(X)\subseteq [\lambda,(1+2\sqrt{2})\lambda]$, where ``$\mbox{sp}$'' stands for spectrum. Thus
\begin{align*}
 \left(
   \begin{array}{cc}
     f(C^*AC) & 0 \\
     0 & f(DAD+CAC^*) \\
   \end{array}
 \right) &=  f\left(
              \begin{array}{cc}
                C^*AC & 0 \\
                0 & DAD+CAC^* \\
              \end{array}
            \right)
 \\ &= f\left(\frac{U^*XU+V^*XV}{2}\right) \\
 &\leq f\left(U^*\frac{X}{2} U\right)+f\left(V^*\frac{X}{2}V\right)\\
 &= U^*f\left(\frac{X}{2}\right)U+V^*f\left(\frac{X}{2}\right)V\\
 &= {\scriptsize2\left(
              \begin{array}{cc}
                C^*f\left(\frac{A}{2}\right)C & 0 \\
                0 & D f\left(\frac{A}{2}\right)D+Cf\left(\frac{A}{2}\right)C^* \\
              \end{array}
            \right),}
\end{align*}
whence $f(C^*AC)\leq \ 2 C^*f\left(\frac{A}{2}\right)C$.

(ii) Set
\begin{eqnarray*}
\widetilde{C}=\left(
          \begin{array}{c}
            C_1 \\
           \vdots \\
            C_n \\
          \end{array}
        \right), \ \ \widetilde{A}=\left(
                                \begin{array}{cccc}
                                  A_1 &  &  &  0  \\
                                  & A_2 & \ \ &  \\
                                   & \ \ \ \ \ & \ddots & \\
                                    0 &  &  & A_n \\
                                \end{array}
                              \right)\,.
\end{eqnarray*}
Thus
\begin{eqnarray*}
f\left(\sum_{i=1}^n C_i^*A_iC_i\right) = f(\widetilde{C}^*\widetilde{A}\widetilde{C}) \leq 2 \widetilde{C}^*f\left(\frac{\widetilde{A}}{2}\right)\widetilde{C}=2 \ \sum_{i=1}^n C_i^*f\left(\frac{A}{2}\right)C_i.
\end{eqnarray*}
\end{proof}

\begin{corollary}
Let $\lambda \in \mathbb{R}_+$ and $w_i \in [\lambda,(1+2\sqrt{2})\lambda)]\,\,(i=1,\cdots, n)$. Let $f$ be a non-negative operator monotone function on $[0,\infty)$ and $A_i$ be positive operators such that $\sum_{i=1}^n A_i=I$. Then
$$ f(\sum_{i=1}^n w_iA_i)\leq 2\sum_{i=1}^n f(\frac{w_i}{2})A_i\,.$$
\end{corollary}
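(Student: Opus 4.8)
The plan is to obtain this as an immediate specialization of Theorem~\ref{1-4}(ii), choosing the data so that the operator-weighted average degenerates into the scalar-weighted one. First I would set $C_i = A_i^{1/2}$ for each $i$. Since the $A_i$ are positive and $\sum_{i=1}^n A_i = I$, this gives $\sum_{i=1}^n C_i^* C_i = \sum_{i=1}^n A_i = I$, so the normalization hypothesis of Theorem~\ref{1-4}(ii) is satisfied. Next, in the role of the positive operators appearing under $f$ in that theorem, I would take the scalar multiples $w_i I$ of the identity. Because each $w_i$ lies in $[\lambda,(1+2\sqrt{2})\lambda]$, the operator $w_i I$ has spectrum $\{w_i\}$ contained in that interval, so the spectral constraint required by Theorem~\ref{1-4}(ii) also holds.

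With these substitutions the two sides of the theorem's inequality collapse as desired. On the left, $C_i^*(w_i I)C_i = w_i A_i^{1/2}A_i^{1/2} = w_i A_i$, so $f\bigl(\sum_{i=1}^n C_i^*(w_i I)C_i\bigr) = f\bigl(\sum_{i=1}^n w_i A_i\bigr)$. On the right, the continuous functional calculus applied to the scalar operator $w_i I$ yields $f\bigl(\tfrac{w_i}{2}I\bigr) = f\bigl(\tfrac{w_i}{2}\bigr)I$, whence $C_i^* f\bigl(\tfrac{w_i}{2}I\bigr)C_i = f\bigl(\tfrac{w_i}{2}\bigr)A_i$. Summing over $i$ and invoking Theorem~\ref{1-4}(ii) gives precisely $f\bigl(\sum_{i=1}^n w_i A_i\bigr) \leq 2\sum_{i=1}^n f\bigl(\tfrac{w_i}{2}\bigr)A_i$.

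I do not expect any genuine analytic obstacle in this argument, since all the positivity and the subadditivity structure have already been packaged into Theorem~\ref{1-4}(ii). The only points demanding care are purely formal: verifying that the scalar operators $w_i I$ satisfy the spectral hypothesis $\mathrm{sp}(w_i I)\subseteq[\lambda,(1+2\sqrt{2})\lambda]$, and correctly tracking the functional calculus so that $f$ commutes with scalar multiplication of the identity. No new integral representation or positivity of a symmetrized product needs to be established beyond what the cited theorem supplies.
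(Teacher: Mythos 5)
Your proposal is correct and is exactly the paper's proof: the paper simply says ``Put $C_i = A_i^{1/2}$ in part (ii) of Theorem~\ref{1-4}'', with the operators there taken to be the scalars $w_iI$, and you have merely spelled out the routine verifications (the normalization $\sum_i C_i^*C_i=I$, the spectral condition $\mathrm{sp}(w_iI)=\{w_i\}\subseteq[\lambda,(1+2\sqrt2)\lambda]$, and the functional-calculus identity $f(\tfrac{w_i}{2}I)=f(\tfrac{w_i}{2})I$) that the paper leaves implicit.
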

\begin{proof}
Put $C_i = A_i^{\frac{1}{2}}$ in part (ii) of Theorem \ref{1-4}.
\end{proof}

\begin{theorem}\label{2-6}
Let $A,B \in \mathbb{B}(\mathscr{H})_+$. Then $B^2 \leq A^2$ if and
only if for each operator convex function $f$ on $[0,\infty)$ with
$f^{'}_{+}(0) \geq 0$ it holds that
$$ f(B)\leq f(A)$$
\end{theorem}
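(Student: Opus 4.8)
The plan is to treat the two implications separately, since the real content sits entirely in the forward direction. For the easy implication, if $f(B)\leq f(A)$ holds for every operator convex $f$ on $[0,\infty)$ with $f'_{+}(0)\geq 0$, then I would simply specialize to $f(t)=t^2$, which is operator convex and satisfies $f'_{+}(0)=0$; this immediately yields $B^2\leq A^2$. So no further work is needed there.

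For the converse, assume $B^2\leq A^2$. My strategy is to reduce the desired inequality $f(B)\leq f(A)$ to the single assertion that $s\mapsto f(\sqrt{s})$ is operator monotone on $[0,\infty)$. Once that is known, I would set $X=B^2\leq A^2=Y$ and use $\sqrt{X}=B$, $\sqrt{Y}=A$ (valid because $A,B\geq 0$) to conclude $f(B)=f(\sqrt{X})\leq f(\sqrt{Y})=f(A)$. Thus the whole theorem rests on the following lemma: \emph{if $f$ is operator convex on $[0,\infty)$ with $f'_{+}(0)\geq 0$, then $f(\sqrt{\,\cdot\,})$ is operator monotone.}

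To prove the lemma I would invoke the integral representation of operator convex functions recalled in the introduction, namely $f(t)=f(0)+\beta t+\gamma t^2+\int_0^\infty \frac{\lambda t^2}{\lambda+t}\,d\mu(\lambda)$, where $\beta=f'_{+}(0)\geq 0$, $\gamma\geq 0$ and $\mu\geq 0$. Substituting $t=\sqrt{s}$ term by term gives $f(\sqrt{s})=f(0)+\beta\sqrt{s}+\gamma s+\int_0^\infty \frac{\lambda s}{\lambda+\sqrt{s}}\,d\mu(\lambda)$. The constant term, the term $\beta\sqrt{s}$ (since $\beta\geq 0$ and $\sqrt{\,\cdot\,}$ is operator monotone), and the term $\gamma s$ are all operator monotone, and a positive-measure integral of operator monotone functions is again operator monotone, exactly as in the proof of Theorem \ref{1-1}. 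Hence everything hinges on the kernel $\psi_{\lambda}(s)=\frac{\lambda s}{\lambda+\sqrt{s}}$.

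The main obstacle is therefore to show that each $\psi_{\lambda}$ is operator monotone, and this is precisely where the convexity and sign hypotheses are genuinely used: the claim cannot be reduced to closure properties alone, because $\psi_{\lambda}(s)=\sqrt{s}\cdot\frac{\lambda\sqrt{s}}{\lambda+\sqrt{s}}$ is a \emph{product} of two operator monotone functions, which need not be operator monotone. Instead I would verify Löwner's analytic criterion directly: $\psi_{\lambda}$ extends analytically to $\mathbb{C}\setminus(-\infty,0]$, and for $z$ in the open upper half-plane, writing $w=\sqrt{z}=u+iv$ with $u,v>0$, a short computation gives $\mathrm{Im}\,\psi_{\lambda}(z)=\frac{\lambda v\,(2\lambda u+|w|^2)}{|\lambda+w|^2}>0$. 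Thus $\psi_{\lambda}$ is a Pick function and so operator monotone on $[0,\infty)$, which completes the lemma and hence the theorem. As a by-product the term $\beta\sqrt{s}$ records the familiar fact that $B^2\leq A^2$ already forces $B\leq A$.
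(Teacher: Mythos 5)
Your proof is correct, but the key step is handled by a genuinely different mechanism than the paper's. Both arguments reduce, via the integral representation $f(t)=f(0)+\beta t+\gamma t^2+\int_0^\infty \frac{\lambda t^2}{\lambda+t}\,d\mu(\lambda)$, to controlling the kernels $f_\lambda(t)=\frac{\lambda t^2}{\lambda+t}$. The paper proves $f_\lambda(B)\leq f_\lambda(A)$ directly from $B^2\leq A^2$ by an algebraic manipulation of resolvents: for invertible $A,B$ it passes from $\lambda A^{-2}+A^{-1}\leq \lambda B^{-2}+B^{-1}$ (using both $A^{-2}\leq B^{-2}$ and $A^{-1}\leq B^{-1}$) to $B^2(B+\lambda)^{-1}\leq A^2(A+\lambda)^{-1}$ by inverting, and then removes invertibility by an $\epsilon$-perturbation. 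You instead prove the stronger, cleaner statement that $s\mapsto f_\lambda(\sqrt{s})=\frac{\lambda s}{\lambda+\sqrt{s}}$ is operator monotone, by verifying L\"owner's criterion; your computation $\mathrm{Im}\,\psi_\lambda(z)=\frac{\lambda v(2\lambda u+|w|^2)}{|\lambda+w|^2}>0$ with $w=\sqrt{z}=u+iv$, $u,v>0$, checks out, and you are right that this cannot be dispatched by closure properties since $\psi_\lambda$ is a product of operator monotone functions. What your route buys is (a) no invertibility issue and no limiting argument in $\epsilon$ (only the routine extension from $(0,\infty)$ to $[0,\infty)$ by continuity at $0$, which you should perhaps say explicitly, since L\"owner's theorem as quoted gives monotonicity on the open interval), and (b) the intermediate lemma that $f(\sqrt{\,\cdot\,})$ is operator monotone, which is precisely the $p=\tfrac12$ case of the paper's Corollary \ref{3-3} — a result the paper only obtains downstream of Theorem \ref{2-6} via Theorem \ref{mos}. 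In effect you prove that corollary directly and deduce the theorem from it, reversing the paper's logical order; this is a legitimate and arguably more conceptual organization.
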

\begin{proof}
Let $A,B \in \mathbb{B}(\mathscr{H})_+$. Suppose that for each
operator convex function $f$ on $[0,\infty)$ with $f^{'}_{+}(0) \geq
0$ we have $f(B)\leq f(A)$. Due to $f(t)=t^{2}$ is operator convex
on $[0,\infty)$ and $f^{'}_{+}(0) \geq 0$, we get $B^2 \leq A^2$.

Next we show the converse. First assume that $A$ and $B$ are
invertible positive operators such that $B^{2} \leq A^{2}$. Then
$\lambda A^{-2}+A^{-1} \leq \lambda B^{-2}+B^{-1}$. This inequality
is equivalent to
$(\lambda B^{-2}+B^{-1})^{-1} \leq (\lambda A^{-2}+A^{-1})^{-1}$ and this is, in turn, equivalent to $B^{2}(B+\lambda)^{-1} \leq A^{2}(A+\lambda)^{-1}$. On the other hand $f_{\lambda}(t)=\frac{\lambda t^2}{\lambda+t}$ is operator convex on $[0,\infty)$, so $f_{\lambda}(B) \leq f_{\lambda}(A)$.\\
If $A$ and $B$ are positive and $(A-B,A+B)\in \mathcal{M}(\mathscr{H})$, then for each $\epsilon \geq 0$ let us set $A_{\epsilon}=A+\epsilon$ and $B_{\epsilon}=B+\epsilon$. Then $(A_{\epsilon}-B_{\epsilon},A_{\epsilon}+ B_{\epsilon})\in \mathcal{M}(\mathscr{H})$. Hence
$$ f_{\lambda}(B+\epsilon)\leq f_{\lambda}(A+\epsilon)\,.$$
Letting $\epsilon\to 0$ we get $ f_{\lambda}(B)\leq f_{\lambda}(A)$.

Now suppose that $f$ is an operator convex function on $[0,\infty)$ with $f^{'}_{+}(0) \geq 0$. It is known that $f$ can be represented on $[0,\infty)$ by
\begin{eqnarray}\label{mos3}
f(t)=f(0)+\beta t +\gamma t^2 + \int_0^\infty f_{\lambda }(t) \ d\mu(\lambda)\,,
\end{eqnarray}
where $\gamma \geq 0$, $\beta = f_+^{'}(0)$ and $\mu$ is a positive measure on $[0,\infty)$; see \cite[Chapter V]{BH}. If
$$f(t)= \int_0^\infty f_{\lambda }(t) \ d\mu(\lambda)\,,$$
then due to $f_{\lambda}(B)\leq f_{\lambda}(A)$ for each $\lambda \in \mathbb{R}_+$ we have
$$f(B)\leq f(A)\,.$$
Since $\gamma,\beta \geq 0$ and $B^2 \leq A^2$ the validity of $f(B)\leq f(A)$ is deduced in the general case when $f$ is given by \eqref{mos3}.
\end{proof}
\begin{remark}
It follows from the identity
\begin{eqnarray*}
(A-B)(A+B)+(A+B)(A-B)= 2 (A^2-B^2)
\end{eqnarray*}
that $B^2 \leq A^2$ if and only if $(A-B,A+B)\in
\mathcal{M}(\mathscr{H})$ .
\end{remark}

We need the L\"owner Theorem for establishing our next main result.

\begin{theorem}\cite[L\"owner Theorem]{LOE}\label{Loewner}
A function $g$ defined on $(a, b)$ is operator monotone if and only if it is analytic in $(a, b)$, can analytically be continued to the whole upper half-plane $\{z\in \mathbb{C}: Im(z)>0\}$ and represents there an analytic function whose imaginary part is non-negative.
\end{theorem}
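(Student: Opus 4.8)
The plan is to prove the two implications separately, handling the elementary ``if'' direction through an integral representation and reserving the substantive work for the ``only if'' direction.

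Suppose first that $g$ continues analytically to the upper half-plane $\Pi^{+}=\{z\in\mathbb{C}:\mathrm{Im}(z)>0\}$ with $\mathrm{Im}(g(z))\geq 0$ there; that is, $g$ is a Pick function. I would invoke the Nevanlinna representation
$$g(z)=\alpha+\beta z+\int_{\mathbb{R}}\left(\frac{1}{\lambda-z}-\frac{\lambda}{1+\lambda^{2}}\right)d\nu(\lambda),$$
with $\alpha\in\mathbb{R}$, $\beta\geq 0$ and $\nu$ a positive Borel measure; since $g$ is real and analytic on $(a,b)$, Schwarz reflection forces $\nu$ to have no mass on $(a,b)$. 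Each resolvent $z\mapsto(\lambda-z)^{-1}$ with $\lambda\notin(a,b)$ is operator monotone on $(a,b)$: if $A\leq B$ have spectra in $(a,b)$ and $\lambda>b$, then $\lambda-A\geq\lambda-B>0$, and the operator monotonicity of the inverse gives $(\lambda-A)^{-1}\leq(\lambda-B)^{-1}$ (the case $\lambda<a$ is analogous). Since $z\mapsto z$ is operator monotone and $\beta\geq 0$, $\nu\geq 0$, integrating these operator inequalities against $\nu$ exactly as in the proof of Theorem \ref{1-1} shows that $g$ is operator monotone on $(a,b)$.

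For the converse, suppose $g$ is operator monotone on $(a,b)$. I would first reduce to the smooth case by mollification: for a non-negative bump $\phi_{\epsilon}$ supported in $(-\epsilon,\epsilon)$, the convolution $g_{\epsilon}(t)=\int g(t-s)\phi_{\epsilon}(s)\,ds$ is smooth on $(a+\epsilon,b-\epsilon)$ and stays operator monotone there, since $g(A-s)\leq g(B-s)$ whenever $A\leq B$ and integrating against $\phi_{\epsilon}\geq 0$ preserves the order. With $g_{\epsilon}\in C^{1}$, fix distinct points $t_{1},\dots,t_{n}$ and set $D=\mathrm{diag}(t_{1},\dots,t_{n})$. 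By the Daleckii--Krein formula the Fr\'echet derivative of $g_{\epsilon}$ at $D$ is a Schur product, $\frac{d}{ds}\big|_{s=0}g_{\epsilon}(D+sH)=L\circ H$, where $L=\big[g_{\epsilon}^{[1]}(t_{i},t_{j})\big]$ is the L\"owner matrix of first divided differences. Operator monotonicity makes this derivative positive semidefinite for every $H\geq 0$, and testing against the all-ones matrix $\mathbf{1}\mathbf{1}^{*}\geq 0$, for which $L\circ\mathbf{1}\mathbf{1}^{*}=L$, yields $L\geq 0$; hence all L\"owner matrices of $g_{\epsilon}$ are positive semidefinite.

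The last step, and the main obstacle, is to convert positivity of all L\"owner matrices into the Pick-function conclusion. Here I would appeal to Nevanlinna--Pick interpolation theory: the positive semidefiniteness of the Pick matrices $\big[(g_{\epsilon}(z_{i})-\overline{g_{\epsilon}(z_{j})})/(z_{i}-\bar z_{j})\big]$, of which the L\"owner matrices are the real-node degeneration, is exactly the solvability condition for interpolation by a Pick function, and a normal-families argument then produces an analytic $G_{\epsilon}$ on $\Pi^{+}$ with $\mathrm{Im}(G_{\epsilon})\geq 0$ that restricts to $g_{\epsilon}$. Finally, letting $\epsilon\to 0$ and using that $g_{\epsilon}\to g$ locally uniformly, a compactness argument for the representing measures (equivalently Montel's theorem together with Hurwitz's theorem to preserve the sign of the imaginary part) yields a Pick function extending $g$ itself. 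The genuinely delicate points are this passage from finitely many algebraic positivity conditions to a global analytic continuation and the control of the limiting imaginary part; this is precisely the content of L\"owner's theorem, which is why it is cited here rather than reproved.
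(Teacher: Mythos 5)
The first thing to say is that the paper does not prove this statement at all: Theorem \ref{Loewner} is quoted from L\"owner's 1934 paper and used as a black box, so there is no proof of the authors' to compare yours against. Judged on its own terms, your write-up is an accurate roadmap of the standard modern proof. The ``if'' direction is essentially complete: the Nevanlinna representation of a Pick function, the vanishing of the representing measure on $(a,b)$ via Stieltjes inversion (since $\mathrm{Im}\,g\to 0$ there), the operator monotonicity of $t\mapsto(\lambda-t)^{-1}$ on $(a,b)$ for $\lambda\notin(a,b)$ (note the inverse is order-\emph{reversing} on positive invertibles, which is what actually gives the inequality you state), and integration against the positive measure as in Theorem \ref{1-1}. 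The mollification step and the Daleckii--Krein/Schur-product argument showing that every L\"owner matrix of a $C^1$ operator monotone function is positive semidefinite are also correct and standard.

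The genuine gap is exactly where you flag it: the passage from positive semidefiniteness of all L\"owner matrices to the existence of an analytic continuation to $\Pi^{+}$ with non-negative imaginary part. Invoking ``Nevanlinna--Pick interpolation plus normal families'' names the right machinery but does not carry out the step, and it is not a routine one: the L\"owner matrices are degenerate Pick matrices with real nodes, and one must show that their positivity forces solvability of the interpolation problem at upper half-plane nodes, that the resulting interpolants can be made to agree with $g_\epsilon$ on all of $(a,b)$ simultaneously (not just at finitely many points), and that the limit as $\epsilon\to 0$ survives with $\mathrm{Im}\,G\geq 0$ (Hurwitz) and still restricts to $g$. That implication is the entire analytic content of L\"owner's theorem; since you end by saying it ``is precisely the content of L\"owner's theorem, which is why it is cited here rather than reproved,'' your proposal is, in effect, a correct reduction of the theorem to itself. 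As a justification for citing the result this is perfectly reasonable and matches what the paper does; as a self-contained proof it is incomplete at that one decisive point.
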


\begin{theorem}\label{mos}
Let $g$ be a non-negative operator monotone function on an interval $(a,b)$. Let $g(z)=u(z)+ i v(z)$ be its analytic continuation to the upper half-plane. Then for each operator convex function $f$ on $[0,\infty)$ with $f^{'}_{+}(0) \geq 0$, $f\circ g$ is operator monotone on $(a,b)$ if and only if $u(z)\geq 0$ on the upper half-plane.
\end{theorem}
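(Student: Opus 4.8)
The plan is to apply the L\"owner Theorem (Theorem \ref{Loewner}) to reduce operator monotonicity of $f\circ g$ to a statement about the sign of the imaginary part of an analytic continuation, and then to read off that sign from the integral representation of the operator convex function $f$. Throughout I may assume that $g$ is non-constant, since a constant non-negative $g$ makes both sides trivially true (then $u\equiv g\ge 0$ and $f\circ g$ is constant, hence operator monotone). Because $g$ is operator monotone and non-constant, its continuation $g(z)=u(z)+iv(z)$ satisfies $v=\operatorname{Im} g>0$ on the open upper half-plane by the minimum principle for the non-negative harmonic function $v$; in particular $g$ maps the open upper half-plane into $\{\operatorname{Im} w>0\}$.

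First I would fix the correct candidate for the analytic continuation of $f\circ g$. For $x\in(a,b)$ one has $g(x)\in[0,\infty)$, so $(f\circ g)(x)=f(g(x))$ is defined, and since $f$ extends analytically to $\mathbb{C}\setminus(-\infty,0]$ through its integral representation while $g$ extends to $g(z)$, the composition $z\mapsto f(g(z))$ is by uniqueness the analytic continuation of $f\circ g$ --- provided $g(z)$ avoids the cut $(-\infty,0]$. By the L\"owner Theorem, $f\circ g$ is then operator monotone on $(a,b)$ if and only if this continuation has non-negative imaginary part on the upper half-plane.

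For the forward implication I would test the hypothesis on $f(t)=t^2$, which is operator convex with $f'_+(0)=0\ge 0$. Operator monotonicity of $g^2=f\circ g$ forces, via the L\"owner Theorem, $\operatorname{Im}\big(g(z)^2\big)\ge 0$; but $\operatorname{Im}(g^2)=2uv$, and since $v>0$ on the open upper half-plane this yields $u\ge 0$, as desired.

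For the converse I would assume $u\ge 0$ and use the representation $f(t)=f(0)+\beta t+\gamma t^2+\int_0^\infty \frac{\lambda t^2}{\lambda+t}\,d\mu(\lambda)$ with $\beta,\gamma\ge 0$ and $\mu\ge 0$ from \eqref{mos3}. Writing $w=g(z)=u+iv$ with $u\ge 0$ and $v>0$, the point $w$ lies in the closed right half-plane with positive imaginary part, hence off the cut $(-\infty,0]$, so $f(g(z))$ is genuinely analytic; this closes the analyticity gap flagged above. It then remains to check that each term contributes a non-negative imaginary part: the constant $f(0)$ being real contributes nothing, $\operatorname{Im}(\beta w)=\beta v\ge 0$, $\operatorname{Im}(\gamma w^2)=2\gamma uv\ge 0$, and for the integral kernel a direct computation gives
\begin{equation*}
\operatorname{Im}\!\left(\frac{\lambda w^2}{\lambda+w}\right)=\frac{\lambda v\,(2\lambda u+|w|^2)}{|\lambda+w|^2}\ge 0 .
\end{equation*}
Summing these and integrating in $\lambda$ shows $\operatorname{Im} f(g(z))\ge 0$, so the L\"owner Theorem gives that $f\circ g$ is operator monotone. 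I expect the main obstacle to be this kernel computation together with the bookkeeping that keeps $g(z)$ off the negative real axis: it is precisely the hypothesis $u\ge 0$ that both guarantees analyticity of the composition and makes the factor $2\lambda u+|w|^2$ non-negative for every $\lambda>0$, which is why the condition is simultaneously necessary and sufficient.
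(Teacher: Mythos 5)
Your proof is correct, but the sufficiency direction takes a genuinely different route from the paper's. The paper never computes with the analytic continuation of $f\circ g$: it uses L\"owner's theorem only to show that $g^2$ is operator monotone (because $g$ maps the upper half-plane into the first quadrant, so $g^2$ maps it into itself), then feeds the identity $(g(A)-g(B))(g(A)+g(B))+(g(A)+g(B))(g(A)-g(B))=2(g(A)^2-g(B)^2)$ into Theorem \ref{1-1} to conclude $(g(A)-g(B),g(A)+g(B))\in\mathcal{M}(\mathscr{H})$, and finally invokes Theorem \ref{2-6} to obtain $f(g(B))\le f(g(A))$. You instead verify the Nevanlinna--Pick condition for $f\circ g$ directly from the integral representation of $f$, checking $\operatorname{Im} f(g(z))\ge 0$ term by term; your kernel computation $\operatorname{Im}\bigl(\lambda w^2/(\lambda+w)\bigr)=\lambda v\,(2\lambda u+|w|^2)/|\lambda+w|^2$ is correct. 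Your argument stays entirely on the complex-analytic side, bypasses Theorems \ref{1-1} and \ref{2-6}, and makes transparent why $u\ge 0$ is exactly the right condition; the paper's route instead showcases the symmetrized-product theorem that is the main theme of the article. The necessity direction is essentially the same in both (test on $f(t)=t^2$ and use $v>0$). One small point you should add: to apply L\"owner's theorem you need $f\circ g$ analytic on $(a,b)$ itself, and your extension of $f$ is only guaranteed analytic off the cut $(-\infty,0]$, which contains $0$; so you must rule out $g(x)=0$ for $x\in(a,b)$. This is automatic for non-constant $g$ (a non-negative monotone analytic function with a zero vanishes on a subinterval, hence identically --- exactly the observation in the Remark following the theorem), and you have already disposed of the constant case, so the fix is one line.
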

\begin{proof}
Assume that $u(z)\geq 0$. By the L\"owner Theorem \ref{Loewner}, $v(z)\geq 0$ on the upper half-plane. One then sees that $g$ maps the upper half-plane into the first quadrant of plane. Hence $g^2$ maps the upper half-plane into itself. Utilizing again the L\"owner Theorem we conclude that $g^2$ is operator monotone. Assume that $A$ and $B$ are self-adjoint operators with spectra in $(a,b)$ and $B\leq A$. It follows from $(g(A)-g(B))(g(A)+g(B))+(g(A)+g(B))(g(A)-g(B))= 2 (g(A)^2-g(B)^2)$ that $(g(A)-g(B), g(A)+g(B)) \in \mathcal{M}(\mathscr{H})$. Now Theorem \ref{2-6} implies that for each  operator convex function $f$ on $[0,\infty)$ with $f^{'}_{+}(0) \geq 0$ the function $f\circ g$ is operator monotone on $(a,b)$.

\noindent Conversely, assume that there would be a complex number $z_0$ with ${\rm Im} z_0>0$ such that $u(z_0)<0$. Because $g$ is operator monotone, $v(z_0)\geq 0$. Hence $\frac{\pi}{2} <{\rm Arg}(g(z_0))< \pi$. Therefore $\pi <{\rm Arg}\left(g^2(z_0)\right)< 2\pi$. So that ${\rm Im~} g^2(z_0)< 0$. Using the L\"owner Theorem \ref{Loewner}, $g^2$ is not operator monotone on $(a,b)$, which is a contradiction, since $f(x)=x^2$ is operator convex on $(0,\infty)$.
\end{proof}
\begin{remark}
If a non-negative operator monotone function $g$ on an interval $(a,b)$ is not a zero constant function and $u(z)\geq 0$ on the domain of $g$, then $u(z)>0$ on this domain. To see this note that:\\
(i) If for some $t_0 \in (a,b)$ we have $g(t_0)=0$, then there exists $R>0$ such that $g$ can be represented as $g(t)=\sum_{n=1}^{\infty}\frac{g^{(n)}(t_0)}{n!}(t-t_0)^n$ for all $t \in (t_0-R, t_0+R)$ \cite[p. 63]{B-S}. Since $g$ is a non-negative monotone function, $g(t) = 0$ for all $t \leq t_0$. Hence $g^{(n)}(t_0)=0$ for all $n$, so $g$ is zero on the neighborhood $(t_0-R, t_0+R)$ of $t_0$. Thus $\{t: g(t)=0\}$ is clopen. Hence $g$ is zero on $(a,b)$ contradicting the assumption above. Thus $g(t)>0$ on $(a,b)$.\\
(ii) If $g \neq 0$ is a constant function, then clearly $u(z)>0$.\\
(iii) If $g$ is not a constant function, then by the open mapping theorem for non-constant analytic functions, $u$ maps the upper half-plane into $\{z: {\rm Im}z >0~ \&~ u(z) > 0 \}$.
\end{remark}

\begin{corollary}\label{3-3}
Let $0\leq p \leq \frac{1}{2}$ and let $f$ be an operator convex function on $[0,\infty)$ with $f^{'}_{+}(0) \geq 0$. If $B \leq A$, then
$$f(B^p) \leq f(A^p)\,.$$
\begin{proof}
Since for $0 \leq p \leq \frac{1}{2}$ the function $g(t)=t^{p}$ is non-negative operator monotone and $g$ takes the upper half-plane into the first quarter of plane, by Theorem \ref{mos} we get $f\circ g$ is operator monotone on $[0,\infty)$.
\end{proof}
\begin{corollary}
Let $0\leq p \leq \frac{1}{2}$ and $f$ be a non-negative operator monotone function on $[0,\infty)$. Then for positive operators $A$ and $B$ with $B\leq A$,

(i) $B^pf(B^p)\leq A^pf(A^p)$;

(ii) If $f$ is strictly positive on $(0,\infty)$ and $A, B$ are invertible, then $ A^{p-1}f(A^p) \leq B^{p-1}f(B^p)$.
\end{corollary}
\begin{proof}
(i) Due to $f$ is operator monotone on $[0,\infty)$, by \cite[Theorem 2.4]{H-P}, the function $g(t)=tf(t)$ is operator convex on $[0,\infty)$, hence $g^{'}_{+}(0)=f(0) \geq 0$. Corollary \ref{3-3} then yields
$$B^pf(B^p)\leq A^pf(A^p)\,.$$
(ii) By part (i), $h(t)=t^pf(t^p)$ is operator monotone on $(0,\infty)$. By \cite[Corollary 2.6]{H-P}), $th(t)^{-1}$ is operator monotone, hence
$$B^{1-p}f(B^p)^{-1}\leq A^{1-p}f(A^p)^{-1}\,.$$
Therefore
$$ A^{p-1}f(A^p) \leq B^{p-1}f(B^p)\,.$$
\end{proof}
\end{corollary}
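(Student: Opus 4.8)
The plan is to obtain both inequalities from Corollary \ref{3-3} by introducing the auxiliary function $g(t)=tf(t)$ and exploiting standard closure properties of the class of operator monotone functions.

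For part (i), I would first rewrite the two sides as values of a single operator convex function. Since $f$ is operator monotone on $[0,\infty)$, the product $g(t)=tf(t)$ is operator convex on $[0,\infty)$ by \cite[Theorem 2.4]{H-P}, and its right derivative at the origin is $g'_{+}(0)=f(0)\geq 0$ because $f$ is non-negative. Thus $g$ meets the hypotheses of Corollary \ref{3-3}, and applying that corollary to $g$ under the assumption $B\leq A$ immediately gives $g(B^p)\leq g(A^p)$, which is exactly $B^pf(B^p)\leq A^pf(A^p)$.

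For part (ii), the first step is to read part (i) as the statement that $h(t)=t^pf(t^p)$ is an operator monotone function on $(0,\infty)$; it is moreover positive there, since $f$ is strictly positive on $(0,\infty)$. The key extra ingredient is the classical fact, recorded in \cite[Corollary 2.6]{H-P}, that for a positive operator monotone function $h$ on $(0,\infty)$ the function $t\,h(t)^{-1}$ is again operator monotone. Applied to our $h$, this shows that $t^{1-p}f(t^p)^{-1}$ is operator monotone, so $B\leq A$ yields $B^{1-p}f(B^p)^{-1}\leq A^{1-p}f(A^p)^{-1}$. Inverting this inequality---which reverses the order---and rewriting the exponents then produces $A^{p-1}f(A^p)\leq B^{p-1}f(B^p)$.

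The one point I expect to require care is justifying that final inversion. Before invoking the order-reversing property of operator inversion I must know that $A^{1-p}f(A^p)^{-1}$ and $B^{1-p}f(B^p)^{-1}$ are genuine positive invertible operators. This is automatic here because each is a continuous function of a single operator, namely $\phi(A)$ and $\phi(B)$ with $\phi(t)=t^{1-p}f(t^p)^{-1}$; the invertibility of $A$ and $B$ together with the strict positivity of $f$ on $(0,\infty)$ guarantees that $f(A^p)$ and $f(B^p)$ are invertible, so $\phi$ is well defined and positive on the relevant spectra. With that settled, order-reversal of the inverse map completes the argument.
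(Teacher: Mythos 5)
Your proposal is correct and follows essentially the same route as the paper: part (i) applies Corollary \ref{3-3} to $g(t)=tf(t)$, which is operator convex with $g'_{+}(0)=f(0)\geq 0$ by \cite[Theorem 2.4]{H-P}, and part (ii) reads (i) as operator monotonicity of $h(t)=t^pf(t^p)$, invokes \cite[Corollary 2.6]{H-P} for $th(t)^{-1}$, and inverts. Your extra care in justifying the invertibility before applying the order-reversing property of the inverse is a welcome addition but does not change the argument.
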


\textbf{Acknowledgment.} The authors would like to sincerely thank the anonymous referee for useful comments and suggestions.

\bibliographystyle{amsplain}

\begin{thebibliography}{99}

\bibitem{A-Z}
T. Ando and X. Zhan, \textit{Norm inequalities related to operator monotone functions}, Math. Ann. \textbf{315} (1999), 771-–780.

\bibitem{A-B}
J.S. Aujla and J.C. Bourin, \textit{Eigenvalue inequalities for convex and log-convex functions}, Linear Algebr Appl. \textbf{424} (2007) 25-–35.

\bibitem{A-S} J.S. Aujla and F. Silva, \textit{Weak majorization inequalities and convex functions}, Linear Algebra Appl. \textbf{369} (2003), 217–-233.

\bibitem{B-S} J. Bendat and S. Sherman, \textit{Monotone and convex operator functions}, Trans. Amer. Math. Soc. \textbf{79} (1955), 58-71.

\bibitem{BH}
R. Bhatia, \textit{Matrix Analysis}, Springer, New York, 1997.

\bibitem{seo} T. Furuta, J. Mi\'ci\'c Hot, J. Pe\v cari\'c and Y. Seo, \textit{Mond--Pe\v cari\'c Method in Operator Inequalities},
Element, Zagreb, 2005.

\bibitem{KG}
K. Gustafson, \textit{Interaction antieigenvalues}, J. Math. Anal. Appl. \textbf{299} (2004), no. 1, 174--185.


\bibitem{H-P}
F. Hansen and G. Pedersen, \textit{Jensen's inequality for operators and L\"owner's theorem}, Math. Ann. \textbf{258} (1981/82), no. 3, 229–-241.

\bibitem{HAN} F. Hansen, \textit{An operator inequality}, Math. Ann. \textbf{246} (1979/80), no. 3, 249-–250.

\bibitem{LOE}
K. L\"owner, \textit{Uber monotone matrix funktionen}, Math. Z. \textbf{38} (1934), 177--216.

\bibitem{DWN}
D.W. Nicholson, \textit{Eigenvalue bounds for $AB+BA$, with $A$, $B$ positive definite matrices}, Linear Algebra Appl. \textbf{24} (1979), 173--183.

\bibitem{W-S}
W.G. Strang, \textit{Eigenvalues of Jordan products}, Amer. Math. Monthly. \textbf{63} (1962), 37–-40.

\end{thebibliography}

\end{document}